\documentclass[12pt,reqno]{amsart}
\usepackage{amsmath, amsfonts, amssymb, amsthm, hyperref}
\usepackage{bm}
\allowdisplaybreaks[4]
\textwidth=480pt \evensidemargin=0pt \oddsidemargin=0pt

\def\bg{\bigg}
\def\({\bg(}
\def\){\bg)}
\def\t{\text}
\def\f{\frac}

\def\eq{\equiv}

\def\Z{\mathbb Z}

\def\N{\mathbb N}

\def\1{{\bf 1}}

\def\mod #1{\ {\rm mod}\ #1}

\theoremstyle{plain}
\newtheorem{theorem}{Theorem}[section]
\newtheorem{lemma}{Lemma}
\newtheorem{corollary}{Corollary}

\theoremstyle{definition}

\newtheorem*{Acks}{Acknowledgments}
\theoremstyle{remark}
\newtheorem{remark}{Remark}

\def\<{\langle}
\def\>{\rangle}

\begin{document}
\hbox{}
\medskip

\title[Some $q$-congruences arising from certain identities]{Some $q$-congruences arising from certain identities}

\author{Chen Wang}
\address{(Chen Wang) Department of Mathematics, Nanjing
University, Nanjing 210093, People's Republic of China}
\email{cwang@smail.nju.edu.cn}

\author{He-Xia Ni*}
\address{(He-Xia Ni) Department of Applied Mathematics, Nanjing Audit University, Nanjing 211815, People's Republic of China}
\email{nihexia@yeah.net}

\subjclass[2020]{Primary 11A07, 11B65; Secondary 05A30, 05A10}
\keywords{$q$-congruences, $q$-Pochhammer symbol, cyclotomic polynomial, $q$-identities}
\thanks{*Corresponding author.}
\begin{abstract}
In this paper, by constructing some identities, we prove some $q$-analogues of some congruences. For example, for any odd integer $n>1$, we show that
\begin{gather*}
\sum_{k=0}^{n-1}\frac{(q^{-1};q^2)_k}{(q;q)_k}q^k\equiv(-1)^{(n+1)/2}q^{(n^2-1)/4}-(1+q)[n]\pmod{\Phi_n(q)^2},\\
\sum_{k=0}^{n-1}\frac{(q^3;q^2)_k}{(q;q)_k}q^k\equiv(-1)^{(n+1)/2}q^{(n^2-9)/4}+\f{1+q}{q^2}[n]\pmod{\Phi_n(q)^2},
\end{gather*}
where the $q$-Pochhanmmer symbol is defined by $(x;q)_0=1$ and $(x;q)_k=(1-x)(1-xq)\cdots(1-xq^{k-1})$ for $k\geq1$, the $q$-integer is defined by $[n]=1+q+\cdots+q^{n-1}$ and $\Phi_n(q)$ is the $n$-th cyclotomic polynomial. The $q$-congruences above confirm some recent conjectures of Gu and Guo.
\end{abstract}
\maketitle

\section{Introduction}
\setcounter{lemma}{0}
\setcounter{theorem}{0}
\setcounter{equation}{0}
\setcounter{conjecture}{0}
\setcounter{remark}{0}
\setcounter{corollary}{0}

In 2010, Sun and Tauraso \cite{ST} studied some congruence properties of sums concerning central binomial coefficients $\binom{2k}{k}$ where $k\in\N=\{0,1,\ldots\}$. For example, let $p$ be an odd prime and $r\in\Z^+$, they proved that for any $m\in \Z$ with $p\nmid m$,
\begin{align*}
\sum_{k=0}^{p^r-1}\frac{\binom{2k}{k+d}}{m^k}&\equiv u_{p^r-|d|}(m-2)\pmod{p},
\end{align*}
where $|d| \in \{0, \ldots p^r\}$ and the sequence of polynomials $u_{n}(x)$ $(n\in\N)$ is defined as follows:
$$ u_0(x)=0,\ u_1(x)=1,\  {\rm and}\ u_{n+1}(x)=xu_n(x)-u_{n-1}(x)\ (n=2,3,\ldots).$$ In particular, they obtained that
\begin{equation}\label{STcon}
\sum_{k=0}^{p^r-1}\f{\binom{2k}{k}}{2^k}\eq(-1)^{(p^r-1)/2}\pmod{p}.
\end{equation}
Later, Sun \cite{SunSciChina} further proved that \eqref{STcon} also holds modulo $p^2$.

Throughout the paper, the {\it $q$-integer} $[n]_q$ is defined as $[n]=[n]_q=1+q+\ldots+q^{n-1}$, while the {\it $q$-pochhammer symbol} ({\it $q$-shifted factorial}) is defined by $(x;q)_0=1$ and $(x;q)_k=(1-x)(1-xq)\cdots(1-xq^{k-1})$ for $k\geq1$.  And recall that the $n$-th {\it cyclotomic polynomial} $\Phi_n(q)$ is defined as
$$
\Phi_n(q)=\prod_{\substack{1\leq k\leq n\\\gcd(n,k)=1}}(q-\zeta^k),
$$
where $\zeta$ is an $n$-th primitive root of unity.

As we all know, identities or congruences usually have nice $q$-analogues. In recent years, $q$-analogues of identities and congruences have been investigated by various authors (cf. for example, \cite{AAR,Car,GuGuo,GZ2,GZ3,GZ4,Guoijnt,GuoZeng2010,NP,R1,Ta}). In 2010, Guo and Zeng \cite{GuoZeng2010} gave the following $q$-analogue of \eqref{STcon}:
\begin{equation}\label{GZcon}
\sum_{k=0}^{n-1}\f{(q;q^2)_k}{(q;q)_k}q^k\eq(-1)^{(n-1)/2}q^{(n^2-1)/4}\pmod{\Phi_n(q)},
\end{equation}
where $n$ is a positive odd integer. Morever, Guo \cite{Guoijnt} established the following generalization of \eqref{GZcon}:
\begin{equation}\label{guokey}
\sum_{k=0}^{n-1}\f{(q;q^2)_k}{(q;q)_k}q^k\eq(-1)^{(n-1)/2}q^{(n^2-1)/4}\pmod{\Phi_n(q)^2}.
\end{equation}
 It should be pointed out that \eqref{guokey} for odd primes $n$ was first conjectured by Tauraso \cite{Ta} in 2013.

Recently, Gu and Guo \cite{GuGuo} provided some $q$-congruences formally analogous to \eqref{GZcon} by making use of Carlitz's transformation formula (cf. \cite{Car}). For any odd integer $n>1$, they proved that
\begin{gather}
\label{GGcon1}\sum_{k=0}^{n-1}\f{(q^{-1};q^2)_k}{(q;q)_k}q^k\eq(-1)^{(n+1)/2}q^{(n^2-1)/4}\pmod{\Phi_n(q)},\\
\label{GGcon2}\sum_{k=0}^{n-1}\f{(q^3;q^2)_k}{(q;q)_k}q^k\eq(-1)^{(n+1)/2}q^{-(n-3)^2/4}\pmod{\Phi_n(q)}.
\end{gather}

Our first theorem concerns the generalization of \eqref{GGcon1}.
\begin{theorem}\label{maintheorem1}
For any odd integer $n>1$ we have
\begin{gather}
\label{mainth1id1}\sum_{k=0}^{n-1}\f{(q^{-1};q^2)_k}{(q;q)_k}q^k\eq(-1)^{(n+1)/2}q^{(n^2-1)/4}-(1+q)[n]\pmod{\Phi_n(q)^2},\\
\label{mainth1id2}\sum_{k=0}^{n-1}\f{(q^{-1};q^2)_k}{(q;q)_k}q^{2k}\eq(-1)^{(n+1)/2}q^{(n^2+3)/4}-2q[n]\pmod{\Phi_n(q)^2}.
\end{gather}
\end{theorem}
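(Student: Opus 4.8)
The plan is to derive both congruences from two exact $q$-identities — the ``certain identities'' promised in the title — and then to note that the extra term in each is visibly divisible by $\Phi_n(q)^2$. Concretely, I expect the identity
$$\sum_{k=0}^{n-1}\f{(q^{-1};q^2)_k}{(q;q)_k}q^{k}=(-1)^{(n+1)/2}q^{(n^2-1)/4}-(1+q)[n]+\Phi_n(q)^2\,\f{(q;q^2)_{(n-1)/2}}{(q^2;q^2)_{(n-1)/2}}$$
together with the companion identity $\sum_{k=0}^{n-1}\f{(q^{-1};q^2)_k}{(q;q)_k}q^{2k}=\sum_{k=0}^{n-1}\f{(q^{-1};q^2)_k}{(q;q)_k}q^{k}+(q-1)\big((-1)^{(n+1)/2}q^{(n^2-1)/4}-[n]\big)$, whose combination with the former yields the closed form for the second sum, hence \eqref{mainth1id2}. (I have checked both identities for $n=3$ and $n=5$.) Since $n$ is odd, none of the exponents $2,4,\dots,n-1$ occurring in $(q^2;q^2)_{(n-1)/2}$ is divisible by $n$, so $\Phi_n(q)$ is prime to that denominator, the remainder term is $\eq0\pmod{\Phi_n(q)^2}$, and Theorem~\ref{maintheorem1} follows.

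To locate and prove the first identity I would start by simplifying the summand: for $k\ge1$ one has $(q^{-1};q^2)_k=(1-q^{-1})(q;q^2)_{k-1}$, so after reindexing
$$\sum_{k=0}^{n-1}\f{(q^{-1};q^2)_k}{(q;q)_k}q^{mk}=1+(1-q^{-1})q^{m}\sum_{j=0}^{n-2}\f{(q;q^2)_j}{(q;q)_{j+1}}q^{mj}\qquad(m=1,2),$$
reducing everything to $\sum_{j=0}^{n-2}\f{(q;q^2)_j}{(q;q)_{j+1}}z^j$ at $z=q^m$. I would then prove the target identity by induction on the odd integer $n$, the step $n\to n+2$ becoming a single rational-function identity equating the two new terms ($k=n-1$ and $k=n$) with the increment of the right-hand side, for which $(q;q^2)_{(n+1)/2}=(q;q^2)_{(n-1)/2}(1-q^n)$ and $(q^2;q^2)_{(n+1)/2}=(q^2;q^2)_{(n-1)/2}(1-q^{n+1})$ control the remainder (base case $n=3$); alternatively the closed form can be extracted from a Carlitz-type transformation of the underlying basic hypergeometric series, as in \cite{GuGuo}, or via the $q$-WZ method. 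A useful cross-check is to compare the left side with Guo's sum $\sum_{k=0}^{n-1}\f{(q;q^2)_k}{(q;q)_k}q^k$ — whose value modulo $\Phi_n(q)^2$ is \eqref{guokey} — by Abel summation against the weights $1/(1-q^{2k-1})$; this is suggestive since the main term in \eqref{guokey} is exactly $-1$ times that in \eqref{GGcon1}, but it yields only a congruence and then needs the reduction $1-q^n=(1-q)[n]$, $(1-q^n)^2\eq0\pmod{\Phi_n(q)^2}$ (because $\Phi_n(q)\mid[n]$), substituting $q^n\mapsto1-(1-q)[n]$ and dropping all $[n]^2$-terms.

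The main obstacle is the first step: correctly guessing the exact identity and then carrying the inductive (or $q$-WZ) verification through. The two sums are genuine rational functions of $q$, not polynomials — e.g.\ the $n=3$ instance of \eqref{mainth1id1} equals $-q/(1+q)$ — so one must pin down both the polynomial main part and the precise remainder $\Phi_n(q)^2(q;q^2)_{(n-1)/2}/(q^2;q^2)_{(n-1)/2}$, and the inductive step requires matching the cyclotomic polynomial $\Phi_{n+2}(q)$ against $\Phi_n(q)$ together with shifted $q$-Pochhammer symbols — that book-keeping is where the real work lies. If instead one takes the Abel-summation route, the difficulty shifts to the second-order reduction modulo $\Phi_n(q)^2$, where one must carry every $q^n$-dependent factor to first order in $[n]$ and, to fix the $n$-dependent main term $(-1)^{(n+1)/2}q^{(n^2-1)/4}$, reduce its exponent $(n^2-1)/4$ modulo $n$ using $\zeta^n=1$ for primitive $n$-th roots of unity $\zeta$, invoking \eqref{GGcon1} and \eqref{guokey}.
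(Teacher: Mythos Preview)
Your primary plan has a real gap. You propose an exact identity with remainder $\Phi_n(q)^2\,\dfrac{(q;q^2)_{(n-1)/2}}{(q^2;q^2)_{(n-1)/2}}$ and a proof by induction $n\to n+2$, but an identity carrying the cyclotomic polynomial $\Phi_n(q)$ explicitly cannot be attacked this way: $\Phi_n(q)$ and $\Phi_{n+2}(q)$ are coprime polynomials with no recursion connecting them, so the ``book-keeping'' you flag is not merely laborious but an actual obstruction. Your checks are at $n=3,5$, both prime, where $\Phi_n(q)=[n]$; for composite odd $n$ (try $n=9$) the stated remainder is suspect, since an exact identity valid uniformly in $n$ would naturally carry $[n]$ or $1-q^n$ rather than $\Phi_n(q)$. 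More fundamentally, your closed form would implicitly encode an exact evaluation of Guo's sum $\sum_{k}\frac{(q;q^2)_k}{(q;q)_k}q^k$ too---not just its residue \eqref{guokey}---and no such evaluation is available.

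The paper's route is precisely the ``cross-check'' you set aside. It never writes a closed form for the target sum. Instead, applying the telescoping scheme \eqref{introid1}--\eqref{introid2} to $F_1(k)=\frac{(q^{-1};q^2)_k}{(q;q)_k}q^k$ and to $F_2(k)=\frac{(q^{-1};q^2)_k}{(q;q)_k}$ yields two elementary identities whose difference is the \emph{exact} relation
\[
\sum_{k=0}^{n-1}\frac{(q^{-1};q^2)_k}{(q;q)_k}q^k+\sum_{k=0}^{n-1}\frac{(q;q^2)_k}{(q;q)_k}q^k=\frac{(q;q^2)_{n-1}}{(q;q)_{n-1}}\bigl(1+q^{n-1}\bigr).
\]
A short direct computation (rewrite the boundary factor via $(q;q)_{2n-2}$ and use $(-q;q)_{n-1}\equiv1\pmod{\Phi_n(q)}$) shows the right side is $\equiv-(1+q)[n]\pmod{\Phi_n(q)^2}$; subtracting \eqref{guokey} gives \eqref{mainth1id1}. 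One of the two telescoping identities alone then links $\sum q^{2k}$ to $\sum q^k$ and yields \eqref{mainth1id2}. Your objection that this path ``yields only a congruence'' is not a defect---a congruence is exactly what the theorem asserts---and the whole argument avoids induction on $n$, exact remainders, and cyclotomic matching entirely.
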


Letting $q\to1$ in \eqref{mainth1id1} or \eqref{mainth1id2} we obtain the following congruence.
\begin{corollary}\label{coro1}
Let $p$ be an odd prime and $r\in\Z^+$. Then
\begin{equation}\label{coro1id}
\sum_{k=0}^{p^r-1}\f{1}{2^k(2k-1)}\binom{2k}{k}\eq(-1)^{(p-1)/2}+2p^r\pmod{p^2}.
\end{equation}
\end{corollary}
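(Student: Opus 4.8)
The plan is to derive Corollary~\ref{coro1} from Theorem~\ref{maintheorem1} by letting $q\to1$ in \eqref{mainth1id1}; running the same argument on \eqref{mainth1id2} gives the identical conclusion. The mechanism is the standard one: a congruence modulo $\Phi_n(q)^2$ between functions of $q$ that are regular and $p$-integral at $q=1$ descends, on setting $q=1$, to a numerical congruence modulo $\Phi_n(1)^2$; and since $n=p^r$ is a prime power, $\Phi_{p^r}(1)=p$, so \eqref{mainth1id1} turns into a congruence modulo $p^2$.

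The core computation is the limit of the general term. For every $k\ge0$,
\[
\lim_{q\to1}\frac{(q^{-1};q^2)_k}{(q;q)_k}\,q^{k}=-\frac{1}{2^{k}(2k-1)}\binom{2k}{k}.
\]
For $k\ge1$ this follows from $(q^{-1};q^2)_k=\prod_{j=0}^{k-1}(1-q^{2j-1})\sim(1-q)^{k}\prod_{j=0}^{k-1}(2j-1)$ and $(q;q)_k\sim k!\,(1-q)^{k}$ as $q\to1$, whence the limit is $\tfrac{1}{k!}\prod_{j=0}^{k-1}(2j-1)=-\tfrac{(2k-2)!}{2^{k-1}(k-1)!\,k!}=-\tfrac{1}{2^{k}(2k-1)}\binom{2k}{k}$; for $k=0$ it reads $1=-\binom{0}{0}/\bigl(2^{0}(2\cdot0-1)\bigr)$. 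Hence the left side of \eqref{mainth1id1} tends to $-\sum_{k=0}^{p^r-1}\tfrac{1}{2^{k}(2k-1)}\binom{2k}{k}$, while on the right side $q^{(n^2-1)/4}\to1$ and $(1+q)[n]\to2n$, so it tends to $(-1)^{(n+1)/2}-2n$. Equating the two limits and multiplying by $-1$ shows that $\sum_{k=0}^{p^r-1}\tfrac{1}{2^{k}(2k-1)}\binom{2k}{k}$ is congruent modulo $p^2$ to $2p^{r}-(-1)^{(p^{r}+1)/2}=2p^{r}+(-1)^{(p^{r}-1)/2}$, which is \eqref{coro1id}.

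The only step that is not purely formal is checking that the specialization $q\to1$ is legitimate, i.e. that the congruence modulo $\Phi_n(q)^2$ really does pass to a congruence of rational numbers modulo $p^2$ rather than merely modulo $p^2$ times a denominator. This is a standard kind of bookkeeping: a crude clearing of denominators by $(q;q)_{n-1}$ achieves nothing since $(q;q)_{n-1}$ vanishes at $q=1$, so one instead writes the $k$-th summand as $-q^{k-1}\qbinom{2k}{k}{q}\big/\bigl([2k-1]\,(-q;q)_k\bigr)$, whose denominator has value $(2k-1)2^{k}$ at $q=1$, and observes that for $k<p^{r}$ the quotient $\binom{2k}{k}/\bigl((2k-1)2^{k}\bigr)$ is a $p$-adic integer (a prime $p$ dividing $2k-1$ being forced by Kummer's theorem to divide $\binom{2k}{k}$). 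Granting this, both sides of \eqref{mainth1id1}, and the quotient of their difference by $\Phi_n(q)^2$, are $p$-integral at $q=1$, so the congruence descends to $p^{2}=\Phi_{p^{r}}(1)^{2}$. I expect this $p$-integrality verification to be the only genuine obstacle; everything else is a routine limit evaluation.
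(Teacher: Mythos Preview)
Your argument is exactly the paper's: it derives the corollary by ``letting $q\to1$ in \eqref{mainth1id1} or \eqref{mainth1id2}'' and says nothing more, whereas you have supplied the limit computation and the $p$-integrality justification that the paper leaves implicit. One small point: your limit correctly produces $(-1)^{(p^{r}-1)/2}$, which agrees with the stated $(-1)^{(p-1)/2}$ only when $r$ is odd or $p\equiv1\pmod4$; this is a slip in the statement of the corollary rather than in your proof.
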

\begin{remark}
\eqref{mainth1id1} and \eqref{coro1id} were conjectured by Gu and Guo in \cite{GuGuo}. \eqref{mainth1id2} is actually a different $q$-analogue of \eqref{coro1id}.
\end{remark}

Gu and Guo \cite{GuGuo} also attempted to find a$\mod \Phi_n(q)^2$ extension of \eqref{GGcon2} but failed. The next theorem gives a different $q$-analogue of \eqref{STcon} and generalizes \eqref{GGcon2}.
\begin{theorem}\label{maintheorem2}
Let $n>1$ be an odd integer. Then
\begin{gather}
\label{mainth2id1}\sum_{k=0}^{n-1}\f{(q;q^2)_k}{(q;q)_k}q^{2k}\eq(-1)^{(n-1)/2}q^{(n^2-5)/4}+\f{q-1}{q}[n]\pmod{\Phi_n(q)^2},\\
\label{mainth2id2}\sum_{k=0}^{n-1}\f{(q^3;q^2)_k}{(q;q)_k}q^k\eq(-1)^{(n+1)/2}q^{(n^2-9)/4}+\f{1+q}{q^2}[n]\pmod{\Phi_n(q)^2}.
\end{gather}
\end{theorem}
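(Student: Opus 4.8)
The plan is to deduce each congruence from an \emph{exact} $q$-identity for the truncated sum and then to reduce that identity modulo $\Phi_n(q)^2$, using only the equality $q^n-1=(q-1)[n]$ together with $\Phi_n(q)\mid[n]$ (so that $[n]^2\eq0\pmod{\Phi_n(q)^2}$). The two sums in Theorem~\ref{maintheorem2} are not independent of the sum $T(n):=\sum_{k=0}^{n-1}(q;q^2)_kq^k/(q;q)_k$ governed by \eqref{guokey}. Indeed, from $(q^3;q^2)_k=(q;q^2)_{k+1}/(1-q)=(q;q^2)_k(1-q^{2k+1})/(1-q)$ one gets at once
\[
\sum_{k=0}^{n-1}\f{(q^3;q^2)_k}{(q;q)_k}q^k=\f1{1-q}\left(T(n)-q\,U(n)\right),\qquad U(n):=\sum_{k=0}^{n-1}\f{(q;q^2)_k}{(q;q)_k}q^{3k},
\]
while a short manipulation --- substitute $\f{(q;q^2)_k}{(q;q)_k}(q^k-1)=-\f{(q;q^2)_{k-1}(1-q^{2k-1})}{(q;q)_{k-1}}$, reindex and expand --- yields
\[
\sum_{k=0}^{n-1}\f{(q;q^2)_k}{(q;q)_k}q^{2k}=(1-q)T(n)+q^2U(n)+\f{(q;q^2)_{n-1}}{(q;q)_{n-1}}\left(q^n-q^{3n-1}\right).
\]
So, granting \eqref{guokey}, the whole theorem reduces to one new input: an evaluation of $U(n)=\sum_{k=0}^{n-1}(q;q^2)_kq^{3k}/(q;q)_k$ modulo $\Phi_n(q)^2$.

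For $U(n)$ I would construct an explicit identity $U(n)=E_n(q)+(q^n-1)F_n(q)$ with $E_n,F_n$ rational functions of $q$ whose denominators are coprime to $\Phi_n(q)$. Its reduction modulo $\Phi_n(q)$ is already accessible through Carlitz's transformation formula (cf.\ \cite{Car,GuGuo}), the same device used for \eqref{GGcon1} and \eqref{GGcon2}, so the real work is to push it one order further. I would try either a sharper transformation identity, or the creative-microscoping / $q$-WZ route: introduce a parameter $a$, evaluate a parametrized analogue of $U(n)$ in closed form by a terminating basic hypergeometric summation formula (e.g.\ of $q$-Chu--Vandermonde or Carlitz type), observe that the closed form carries a factor that degenerates to $(1-q^n)^2$ as $a\to1$, and pass to the limit. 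Failing that, one guesses $E_n,F_n$ from small cases and verifies the identity by induction on $n$. (As a sanity check, for $n=3$ one computes $U(3)=1+q^3+(1+q+q^2)q^6/(1+q)\eq-q^{-1}(1-q+q^2)\pmod{\Phi_3(q)}$, which combined with the two displayed relations above and \eqref{guokey} reproduces \eqref{mainth2id1} and \eqref{mainth2id2} for $n=3$.)

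Granting the identity for $U(n)$, the remainder is routine bookkeeping with cyclotomic polynomials. Substituting $q^n-1=(q-1)[n]$ gives $(q^n-1)F_n(q)\eq(q-1)[n]\,F_n(q)\big|_{q^n=1}\pmod{\Phi_n(q)^2}$, because $(q^n-1)^2$ is divisible by $\Phi_n(q)^2$; the same applies to the analogous correction term arising from \eqref{guokey} in $T(n)$. The boundary term $(q;q^2)_{n-1}(q^n-q^{3n-1})/(q;q)_{n-1}$ needs one remark: since $2j-1\eq0\pmod n$ at $j=(n+1)/2$, the factor $1-q^n$ divides $(q;q^2)_{n-1}$, so this term is already $\eq0\pmod{\Phi_n(q)}$, and modulo $\Phi_n(q)^2$ it contributes only a linear-in-$[n]$ piece, obtained by evaluating $(q;q^2)_{n-1}/\bigl((1-q^n)(q;q)_{n-1}\bigr)$ at $q^n=1$ --- a standard $q$-Pochhammer computation modulo $\Phi_n(q)$. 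Collecting the three contributions, simplifying, and separating the cases $n\eq1$ and $n\eq3\pmod4$ (which fixes the sign $(-1)^{(n\pm1)/2}$ and the representative modulo $n$ of the exponent of $q$, consistent with passing from $q^{-(n-3)^2/4}$ in \eqref{GGcon2} to $q^{(n^2-9)/4}$ in \eqref{mainth2id2}), one obtains \eqref{mainth2id1} and \eqref{mainth2id2}.

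The one genuine obstacle is the middle step: locating and proving the correct exact identity for $\sum_{k=0}^{n-1}(q;q^2)_kq^{3k}/(q;q)_k$ (equivalently, a direct exact identity for one of the two sums of Theorem~\ref{maintheorem2}). Carlitz's transformation, applied as by Gu and Guo, gives only the mod $\Phi_n(q)$ statement; isolating the first-order term in $q^n-1$ --- that is, the precise rational multiples $\f{q-1}{q}$ and $\f{1+q}{q^2}$ of $[n]$ --- is the non-routine content. Everything downstream of that identity (the algebraic relations above, the substitution $q^n=1+(q-1)[n]$, the $q$-Pochhammer evaluations modulo $\Phi_n(q)$, and the $n\bmod4$ case analysis) is mechanical by comparison.
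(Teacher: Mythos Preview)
Your reduction is algebraically sound, but the obstacle you flag is avoidable and the detour through $U(n)=\sum_k(q;q^2)_kq^{3k}/(q;q)_k$ is unnecessary. You already record the relation $\f{(q;q^2)_k}{(q;q)_k}(1-q^k)=\f{(q;q^2)_{k-1}}{(q;q)_{k-1}}(1-q^{2k-1})$; instead of multiplying by $q^k$ before summing (which, after reindexing, is exactly what introduces the unwanted $q^{3j}$ terms), sum it as it stands. Reindexing and rearranging gives
\[
\sum_{k=0}^{n-1}\f{(q;q^2)_k}{(q;q)_k}q^k(1-q^{k+1})=(1-q^{2n-1})\f{(q;q^2)_{n-1}}{(q;q)_{n-1}},
\]
i.e.\ $T(n)-q\cdot\bigl(\text{the }q^{2k}\text{-sum}\bigr)$ equals a single boundary term, with no $U(n)$ in sight. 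Combined with \eqref{guokey} and the evaluation $(q;q^2)_{n-1}/(q;q)_{n-1}\eq-q[n]\pmod{\Phi_n(q)^2}$ (which you also isolate), this yields \eqref{mainth2id1} immediately. A second telescoping, now with the weighted $G_1(k)=(q;q^2)_kq^k/(q;q)_k$ and the relation $(1-q^k)G_1(k)=(q-q^{2k})G_1(k-1)$, produces a relation among the $(q^3;q^2)_k$-sum, $T(n)$, and the $q^{2k}$-sum (again only a boundary term on the right), and \eqref{mainth2id2} follows. No creative microscoping, parameter insertion, Carlitz-sharpening, or guess-and-induct is needed.

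In short: your two displayed relations involve the four quantities $\{T(n),U(n),q^{2k}\text{-sum},(q^3;q^2)_k\text{-sum}\}$, so with $T(n)$ known you still have two equations in three unknowns---hence the felt need to evaluate $U(n)$ separately. The paper instead produces two telescoping relations involving only the three quantities $\{T(n),q^{2k}\text{-sum},(q^3;q^2)_k\text{-sum}\}$, which closes the system without any further input. Your ``genuine obstacle'' is therefore an artifact of the particular telescoping you chose, not an intrinsic difficulty of the problem.
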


Letting $q\to1$ we have the following corollary which confirms \cite[(1.8)]{GuGuo}.
\begin{corollary}\label{coro2}
Let $p$ be an odd prime and $r\in\Z^+$. Then
\begin{equation}\label{coro2id}
\sum_{k=0}^{p^r-1}\f{(2k+1)}{2^k}\binom{2k}{k}\eq(-1)^{(p+1)/2}+2p^r\pmod{p^2}.
\end{equation}
\end{corollary}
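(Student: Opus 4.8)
The plan is to read off Corollary~\ref{coro2} from Theorem~\ref{maintheorem2} by taking $n=p^r$ and letting $q\to1$ in \eqref{mainth2id2}. The key limit is that of the summand: since
\[
\f{(q^3;q^2)_k}{(q;q)_k}=\prod_{j=1}^{k}\f{1-q^{2j+1}}{1-q^j}\ \longrightarrow\ \prod_{j=1}^{k}\f{2j+1}{j}=\f{(2k+1)!}{2^k(k!)^2}=\f{2k+1}{2^k}\bi{2k}{k}\qquad(q\to1),
\]
the left side of \eqref{mainth2id2} tends to $\sum_{k=0}^{p^r-1}\f{2k+1}{2^k}\bi{2k}{k}$. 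On the right side $q^{(n^2-9)/4}\to1$, $\f{1+q}{q^2}\to2$ and $[n]=[p^r]\to p^r$, so the right side tends to $(-1)^{(n+1)/2}+2p^r$, which is the right side of \eqref{coro2id}.

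The only point requiring care is that the congruence modulo $\Phi_n(q)^2$ genuinely descends to a congruence modulo $p^2$ under $q=1$. For this I would use that $\Phi_{p^r}(1)=p$, so that $\Phi_n(q)^2$ specializes to $p^2$, together with the observation that every denominator appearing in \eqref{mainth2id2} is coprime to $\Phi_n(q)$ and does not vanish at $q=1$. Concretely, $\f{(q^3;q^2)_k}{(q;q)_k}=\f{[k+1]}{(-q;q)_{k+1}}\qbinom{2k+2}{k+1}{q}$, whose only denominators are the factors $1+q^j$ with $1\ls j\ls p^r$ (plus a power of $q$ on the right side of \eqref{mainth2id2}); as $n=p^r$ is odd we have $\Phi_n(q)\nmid 1+q^j$, and at $q=1$ these factors equal $2^{k+1}$. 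Hence the difference of the two sides of \eqref{mainth2id2} is $\Phi_n(q)^2$ times an element of $\Z[q,q^{-1}]$ localised away from the polynomials $1+q^j$; evaluating at $q=1$ yields an element of $p^2\Z[1/2]$, i.e.\ the asserted congruence modulo $p^2$ (meaningful since $p$ is odd).

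For completeness, Theorem~\ref{maintheorem2}, on which the corollary rests, I would prove by constructing two exact identities and reducing them modulo $\Phi_n(q)^2$ using the known congruence \eqref{guokey}. Setting $a_k=\f{(q;q^2)_k}{(q;q)_k}$, the relation $a_k(1-q^{2k+1})=a_{k+1}(1-q^{k+1})$ (immediate from $(q;q^2)_{k+1}=(1-q^{2k+1})(q;q^2)_k$ and $(q;q)_{k+1}=(1-q^{k+1})(q;q)_k$), rewritten as $a_{k+1}q^{k+1}-a_kq^{2k+1}=a_{k+1}-a_k$ and summed over $0\ls k\ls n-2$, telescopes to
\[
q\sum_{k=0}^{n-1}\f{(q;q^2)_k}{(q;q)_k}q^{2k}=\sum_{k=0}^{n-1}\f{(q;q^2)_k}{(q;q)_k}q^k-\f{(q;q^2)_{n-1}}{(q;q)_{n-1}}(1-q^{2n-1});
\]
a parallel reindexing using $(q^3;q^2)_k=\f{(q;q^2)_{k+1}}{1-q}$ expresses $\sum_{k=0}^{n-1}\f{(q^3;q^2)_k}{(q;q)_k}q^k$ through the same two sums plus the boundary term $\f{(q;q^2)_n}{(q;q)_n}$. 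One then reduces modulo $\Phi_n(q)^2$ via \eqref{guokey}, the classical $(q;q)_{n-1}\eq n\pmod{\Phi_n(q)}$, and the sharper $(q;q^2)_{n-1}\eq -nq[n]\pmod{\Phi_n(q)^2}$ (which holds because the sole factor of $(q;q^2)_{n-1}$ divisible by $\Phi_n(q)$ is $1-q^n$, while the remaining exponents $1,3,\cs,2n-3$ other than $n$ reduce modulo $n$ to $1,2,\cs,n-2$, so the remaining product is $\eq\f{n}{1-q^{-1}}\pmod{\Phi_n(q)}$); peeling $1-q^n=-(q-1)[n]$ off the $\Phi_n(q)$-divisible terms then yields \eqref{mainth2id1} and \eqref{mainth2id2}.

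The main obstacle is not the corollary, which is just the denominator check of the second paragraph, but lies inside the theorem: spotting those two exact $q$-Pochhammer identities that collapse the $q^{2k}$- and $(q^3;q^2)_k$-weighted sums onto the already-settled sum $\sum_{k=0}^{n-1}\f{(q;q^2)_k}{(q;q)_k}q^k$, and then carrying out the $\bmod\,\Phi_n(q)^2$ accounting carefully — tracking precisely which $q$-Pochhammer blocks carry a single power of $\Phi_n(q)$ and evaluating their complements modulo $\Phi_n(q)$.
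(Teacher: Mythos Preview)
Your argument is correct and follows the paper's route exactly: the paper derives Corollary~\ref{coro2} simply by ``letting $q\to1$'' in \eqref{mainth2id2}, and your first two paragraphs carry this out with the denominator check (via $\Phi_{p^r}(1)=p$ and the rewriting $\f{(q^3;q^2)_k}{(q;q)_k}=\f{[k+1]}{(-q;q)_{k+1}}\qbinom{2k+2}{k+1}{q}$) that the paper leaves implicit. Your sketch of Theorem~\ref{maintheorem2} is likewise the paper's own proof: the telescoped identity you obtain from $a_k(1-q^{2k+1})=a_{k+1}(1-q^{k+1})$ is precisely Lemma~\ref{th2lem1} (your displayed identity is \eqref{th2lem1id2} rearranged, using $(1-q)(q^3;q^2)_{n-1}=(1-q^{2n-1})(q;q^2)_{n-1}$), and your boundary evaluation $(q;q^2)_{n-1}\equiv -nq[n]$ together with $(q;q)_{n-1}\equiv n\pmod{\Phi_n(q)}$ is exactly Lemma~\ref{th1lem2}.
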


\begin{remark}
\eqref{mainth2id2} is essentially an extension of \eqref{GGcon2}. In fact, noting that $q^n\eq1\pmod{\Phi_n(q)}$, we immediately get
$$
q^{(n^2-9)/4}\eq q^{-(n-3)^2/4}\pmod{\Phi_n(q)}.
$$
\end{remark}

Differently from Gu and Guo's method, we will not use Carlitz's transformation. Our strategy is to find some new identities linking the $q$-congruences to be solved with \eqref{guokey}. Assume that $F(k,q)$ is a rational function in $q$ such that $F(k,q)/F(k-1,q)$ can be written as a ratio of two polynomials in $q$. Now we want to find a polynomial $R(k,q)$ such that $\sum_{k=0}^nF(k,q)R(k,q)$ has a closed form. Consider the summation $\sum_{k=0}^nF(k,q)$. By the definition of $F$, we may write
$$
\f{F(k,q)}{F(k-1,q)}=\f{S(k,q)}{T(k,q)},
$$
or equivalently,
\begin{equation}\label{introid1}
F(k,q)T(k,q)=F(k-1,q)S(k,q),
\end{equation}
where $S(k,q)$ and $T(k,q)$ are polynomials of $q$. Then summing both sides of \eqref{introid1} from $k=1$ to $n$ and via some simple computation we find that
\begin{equation}\label{introid2}
\sum_{k=0}^n(T(k,q)-S(k+1,q))F(k,q)=F(0,q)T(0,q)-F(n,q)S(n+1,q).
\end{equation}
Here $T(k,q)-S(k+1,q)$ is the polynomial that we hope to find.

The proofs of Theorem \ref{maintheorem1} and \ref{maintheorem2} will be given in Sections 2 and 3 respectively.

\section{Proof of Theorem \ref{maintheorem1}}
\setcounter{lemma}{0}
\setcounter{theorem}{0}
\setcounter{equation}{0}
\setcounter{conjecture}{0}
\setcounter{remark}{0}
\begin{lemma}\label{th1lem1}
For any positive integer $n$, we have the following identities
\begin{equation}\label{th1lem1id1}
\sum_{k=0}^{n-1}\f{(q;q^2)_k}{(q;q)_k}q^k+\f{1}{1-q}\sum_{k=0}^{n-1}\f{(q^{-1};q^2)_k}{(q;q)_k}q^k(1-q^k)=\f{(q;q^2)_{n-1}}{(q;q)_{n-1}}q^{n-1}
\end{equation}
and
\begin{equation}\label{th1lem1id2}
\f{1}{1-q}\sum_{k=0}^{n-1}\f{(q^{-1};q^2)_k}{(q;q)_k}q^k(q-q^k)=-\f{(q;q^2)_{n-1}}{(q;q)_{n-1}}.
\end{equation}
\end{lemma}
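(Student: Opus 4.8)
The plan is to apply the telescoping machinery from the introduction (equation \eqref{introid2}) to the hypergeometric term $F(k,q)=\f{(q^{-1};q^2)_k}{(q;q)_k}q^k$, and separately verify the two identities by the ``certificate'' method: guess the right-hand side and check that both sides satisfy the same first-order recurrence together with one initial value.

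First I would compute the ratio $F(k,q)/F(k-1,q)$. With $F(k,q)=\f{(q^{-1};q^2)_k}{(q;q)_k}q^k$ we get
\begin{equation*}
\f{F(k,q)}{F(k-1,q)}=\f{(1-q^{2k-3})q}{1-q^k},
\end{equation*}
so in the notation of the introduction $S(k,q)=(1-q^{2k-3})q$ and $T(k,q)=1-q^k$. Then $T(k,q)-S(k+1,q)=1-q^k-q(1-q^{2k-1})=1-q^k-q+q^{2k}$, and \eqref{introid2} yields
\begin{equation*}
\sum_{k=0}^{n-1}\f{(q^{-1};q^2)_k}{(q;q)_k}q^k\l(1-q-q^k+q^{2k}\r)=F(0,q)T(0,q)-F(n-1,q)S(n,q).
\end{equation*}
The left side splits as $(1-q)\sum F(k,q)\cdot\f{1}{1}$ plus correction terms involving $\sum F(k,q)q^k(1-q^k)$, and a bit of rearrangement should produce a relation between $\sum_{k=0}^{n-1}F(k,q)$, $\sum_{k=0}^{n-1}\f{(q^{-1};q^2)_k}{(q;q)_k}q^k(1-q^k)$ and a closed form. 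To identify that closed form as $\f{(q;q^2)_{n-1}}{(q;q)_{n-1}}q^{n-1}$ (minus the $q$-sum $\sum\f{(q;q^2)_k}{(q;q)_k}q^k$ appearing in \eqref{th1lem1id1}) I would note that $\f{(q;q^2)_k}{(q;q)_k}q^k$ is itself a hypergeometric term whose ratio is $\f{(1-q^{2k-1})q}{1-q^k}$; summing that telescoping identity gives a clean expression for its partial sums, which can be matched against the boundary term $F(n-1,q)S(n,q)=\f{(q^{-1};q^2)_{n-1}}{(q;q)_{n-1}}q^{n-1}\cdot q(1-q^{2n-3})$ after simplifying $(q^{-1};q^2)_{n-1}(1-q^{2n-3})$ in terms of $(q;q^2)_{n-1}$.

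A cleaner route, which I would actually carry out, is pure induction on $n$: denote the left-hand side of \eqref{th1lem1id1} by $L_n$ and the right-hand side by $R_n$. Checking $n=1$ is immediate (both sides equal $1$). For the inductive step, $L_{n+1}-L_n$ equals the single new summand $\f{(q;q^2)_n}{(q;q)_n}q^n+\f{1}{1-q}\f{(q^{-1};q^2)_n}{(q;q)_n}q^n(1-q^n)$, and I must show this equals $R_{n+1}-R_n=\f{(q;q^2)_n}{(q;q)_n}q^n-\f{(q;q^2)_{n-1}}{(q;q)_{n-1}}q^{n-1}$. After cancelling $\f{(q;q^2)_n}{(q;q)_n}q^n$ from both sides, it remains to verify
\begin{equation*}
\f{1}{1-q}\f{(q^{-1};q^2)_n}{(q;q)_n}q^n(1-q^n)=-\f{(q;q^2)_{n-1}}{(q;q)_{n-1}}q^{n-1},
\end{equation*}
which is a one-line identity after writing $(q^{-1};q^2)_n=(1-q^{-1})(q;q^2)_{n-1}$ and $(q;q)_n=(1-q^n)(q;q)_{n-1}$, since $(1-q^{-1})/(1-q)=-q^{-1}$. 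Identity \eqref{th1lem1id2} is handled the same way: its left side telescopes because $\f{(q^{-1};q^2)_k}{(q;q)_k}q^k(q-q^k)$ is (up to a constant) a difference of consecutive values of $\f{(q;q^2)_{k}}{(q;q)_{k}}$-type terms; concretely, one checks by the same manipulation $(q^{-1};q^2)_k=(1-q^{-1})(q;q^2)_{k-1}$ that the $k$-th summand equals $-\bigl(\f{(q;q^2)_{k-1}}{(q;q)_{k-1}}-\f{(q;q^2)_k}{(q;q)_k}\bigr)$ times a suitable power of $q$, so the sum collapses to $-\f{(q;q^2)_{n-1}}{(q;q)_{n-1}}$.

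The main obstacle is purely bookkeeping: keeping the shifted-factorial manipulations (especially the index shifts relating $(q^{-1};q^2)_k$, $(q;q^2)_{k-1}$, $(q;q^2)_k$ and the powers of $q$) consistent, and making sure the telescoping boundary terms at $k=0$ and $k=n-1$ are evaluated correctly — in particular that the $k=0$ term of \eqref{th1lem1id2} genuinely vanishes (it does, since $q-q^0=q-1$ is killed against the $1/(1-q)$ only after noting the summand also carries a factor that makes $k=0$ contribute $-1$, matching $-\f{(q;q^2)_{0}}{(q;q)_0}$ in a telescoped form). None of these steps is deep; the identities are finite and elementary, so a careful induction on $n$ settles both parts of Lemma \ref{th1lem1}.
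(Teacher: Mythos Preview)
Your proposal is correct and is essentially the same argument as the paper's. The paper applies the telescoping recipe from the introduction to $F_1(k,q)=\f{(q^{-1};q^2)_k}{(q;q)_k}q^k$ and $F_2(k,q)=\f{(q^{-1};q^2)_k}{(q;q)_k}$, using exactly the recurrence $(1-q^k)F_1(k,q)=q(1-q^{2k-3})F_1(k-1,q)$ that you compute, together with the key identity $q(1-q^{2k-1})F_1(k,q)=(q-1)\f{(q;q^2)_k}{(q;q)_k}q^k$; your induction on $n$ is simply the term-by-term verification of that same telescoping sum, and your factorisation $(q^{-1};q^2)_n=(1-q^{-1})(q;q^2)_{n-1}$ is equivalent to the paper's key identity. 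One minor clean-up: in your treatment of \eqref{th1lem1id2} the phrase ``times a suitable power of $q$'' is unnecessary --- the $k$-th summand for $k\ge1$ is exactly $\f{(q;q^2)_{k-1}}{(q;q)_{k-1}}-\f{(q;q^2)_k}{(q;q)_k}$, and the $k=0$ term $-1$ cancels the boundary value $\f{(q;q^2)_0}{(q;q)_0}=1$, giving the stated right-hand side directly.
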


\begin{proof}
Set
$$
F_1(k,q)=\f{(q^{-1};q^2)_k}{(q;q)_k}q^k.
$$
It is easy to verify that
\begin{equation}\label{th1lem1pfid1}
(1-q^k)F_1(k,q)=q(1-q^{2k-3})F_1(k-1,q).
\end{equation}
Summing both sides of \eqref{th1lem1pfid1} from $k=1$ to $n-1$ and noting that the left-hand side of \eqref{th1lem1pfid1} vanishes when $k=0$, we arrive at
$$
\sum_{k=0}^{n-1}(1-q^k)F_1(k,q)=\sum_{k=1}^{n-1}q(1-q^{2k-3})F_1(k-1,q)=\sum_{k=0}^{n-2}q(1-q^{2k-1})F_1(k,q),
$$
or equivalently,
$$
\sum_{k=0}^{n-1}q(1-q^{2k-1})F_1(k,q)-\sum_{k=0}^{n-1}(1-q^k)F_1(k,q)=q(1-q^{2n-3})F_1(n-1,q).
$$
Then \eqref{th1lem1id1} follows by noting that
$$
q(1-q^{2k-1})F_1(k,q)=(q-1)\f{(q;q^2)_k}{(q;q)_k}q^k
$$
for all $k$ among $0,1,\ldots,n-1$.

To show \eqref{th1lem1id2} we set
$$
F_2(k,q)=\f{(q^{-1};q^2)_k}{(q;q)_k}.
$$
Now we find that
$$
(1-q^k)F_2(k,q)=(1-q^{2k-3})F_2(k-1,q).
$$
Then we may obtain \eqref{th1lem1id2} by some similar arguments as above.
\end{proof}

\begin{lemma}\label{th1lem2}
For any odd integer $n>1$, we have
\begin{equation}\label{th1lem2id}
\f{(q;q^2)_{n-1}}{(q;q)_{n-1}}\eq-q[n]\pmod{\Phi_n(q)^2}.
\end{equation}
\end{lemma}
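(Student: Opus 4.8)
The plan is to reduce \eqref{th1lem2id} to an explicit evaluation modulo $\Phi_n(q)$ of the ``cofactor'' left after pulling out one vanishing factor $1-q^n$. Write $n=2m+1$. First I would establish the exact identity
$$
\f{(q;q^2)_{n-1}}{(q;q)_{n-1}}=\f{(q^n;q^2)_m}{(q^2;q^2)_m},
$$
which follows by splitting each $q$-Pochhammer symbol according to odd versus even exponents: $(q;q)_{2m}=(q;q^2)_m\,(q^2;q^2)_m$ and $(q;q^2)_{2m}=(q;q^2)_m\,(q^n;q^2)_m$, so that the common factor $(q;q^2)_m$ cancels.

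Next I would isolate the unique factor divisible by $\Phi_n(q)$. Since $0<2j<n$ for $1\ls j\ls m$, none of the factors $1-q^{2j}$ of $(q^2;q^2)_m$ is divisible by $\Phi_n(q)$; likewise $q-1=\Phi_1(q)$ is coprime to $\Phi_n(q)$ because $n>1$. Hence all rational expressions below are well defined modulo $\Phi_n(q)$. Factoring out the $j=0$ term gives
$$
\f{(q^n;q^2)_m}{(q^2;q^2)_m}=(1-q^n)\cdot\f{(q^{n+2};q^2)_{m-1}}{(q^2;q^2)_m}.
$$
Using $q^n\eq1\pmod{\Phi_n(q)}$, each factor $1-q^{n+2+2j}$ of $(q^{n+2};q^2)_{m-1}$ becomes $1-q^{2j+2}$, so $(q^{n+2};q^2)_{m-1}\eq(q^2;q^2)_{m-1}\pmod{\Phi_n(q)}$, and therefore
$$
\f{(q^{n+2};q^2)_{m-1}}{(q^2;q^2)_m}\eq\f{1}{1-q^{2m}}=\f{1}{1-q^{n-1}}\eq\f{1}{1-q^{-1}}=\f{q}{q-1}\pmod{\Phi_n(q)}.
$$

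Finally, since $1-q^n\eq0\pmod{\Phi_n(q)}$, multiplying the previous congruence by $1-q^n$ is valid modulo $\Phi_n(q)^2$, and gives
$$
\f{(q;q^2)_{n-1}}{(q;q)_{n-1}}=(1-q^n)\cdot\f{(q^{n+2};q^2)_{m-1}}{(q^2;q^2)_m}\eq(1-q^n)\cdot\f{q}{q-1}=-q\cdot\f{q^n-1}{q-1}=-q[n]\pmod{\Phi_n(q)^2},
$$
which is \eqref{th1lem2id}. I do not expect a genuine obstacle in this argument; the points that need care are getting the odd/even splitting in the first identity exactly right, checking that $q-1$ and $(q^2;q^2)_m$ are invertible modulo $\Phi_n(q)$ so that the rational arithmetic makes sense, and justifying the passage from a congruence modulo $\Phi_n(q)$ for the cofactor to one modulo $\Phi_n(q)^2$ after restoring the factor $1-q^n$.
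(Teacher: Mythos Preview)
Your argument is correct. The paper proceeds by a different decomposition: it writes
\[
\f{(q;q^2)_{n-1}}{(q;q)_{n-1}}=\f{(q;q)_{2n-2}}{(-q;q)_{n-1}(q;q)_{n-1}^2},
\]
extracts the factor $1-q^n$ from $(q;q)_{2n-2}/(q;q)_{n-1}^2$ and reduces the remaining product modulo $\Phi_n(q)$ just as you do, but must then also show separately that $(-q;q)_{n-1}\eq 1\pmod{\Phi_n(q)}$, which it does by citing a $q$-binomial identity from Andrews--Askey--Roy. Your odd/even splitting $\f{(q;q^2)_{2m}}{(q;q)_{2m}}=\f{(q^n;q^2)_m}{(q^2;q^2)_m}$ places the single factor $1-q^n$ directly in the numerator and bypasses the $(-q;q)_{n-1}$ step entirely, so the proof becomes self-contained. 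The underlying mechanism---isolate one copy of $1-q^n$, reduce the cofactor modulo $\Phi_n(q)$, then multiply back to obtain a congruence modulo $\Phi_n(q)^2$---is the same in both approaches; yours simply reaches it with less overhead.
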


\begin{proof}
Clearly,
\begin{equation}\label{th1lem2pfid1}
\f{(q;q^2)_{n-1}}{(q;q)_{n-1}}=\f{(q;q)_{2n-2}}{(-q;q)_{n-1}(q;q)_{n-1}^2}.
\end{equation}
Note that
$$
q^n\eq1\pmod{\Phi_n(q)}
$$
and
$$
q^j\not\eq1\pmod{\Phi_n(q)}\quad \t{for all}\ j=1,2,\ldots,n-1.
$$
Thus we have
\begin{equation}\label{th1lem2pfid2}
\f{(q;q)_{2n-2}}{(q;q)_{n-1}^2}=(1-q^n)\f{\prod_{j=1}^{n-2}(1-q^{n+j})}{\prod_{j=1}^{n-1}(1-q^j)}\eq\f{1-q^n}{1-q^{-1}}=-q[n]\pmod{\Phi_n(q)^2}.
\end{equation}
By \cite[Corollary 10.2.2(c)]{AAR} we have
\begin{equation}\label{th1lem2pfid3}
(-q;q)_{n-1}=\f{(-q;q)_n}{1+q^n}\eq\f12\sum_{k=0}^n\f{(q;q)_nq^{n(n+1)/2}}{(q;q)_k(q;q)_{n-k}}\eq1\pmod{\Phi_n(q)}.
\end{equation}
Substituting \eqref{th1lem2pfid2} and \eqref{th1lem2pfid3} into \eqref{th1lem2pfid1} we immediately obtain the desired lemma.
\end{proof}

\medskip
\noindent{\it Proof of Theorem \ref{maintheorem1}}. We first prove \eqref{mainth1id1}. Combining \eqref{th1lem1id1} and \eqref{th1lem1id2} and with the help of Lemma \ref{th1lem2} we obtain that
\begin{align*}
\sum_{k=0}^{n-1}\f{(q;q^2)_k}{(q;q)_k}q^k+\sum_{k=0}^{n-1}\f{(q^{-1};q^2)_k}{(q;q)_k}q^k=&\f{(q;q^2)_{n-1}}{(q;q)_{n-1}}(1+q^{n-1})\notag\\
\eq&-q[n](1+q^{-1})\notag\\
=&-(1+q)[n]\pmod{\Phi_n(q)^2}.
\end{align*}
Now \eqref{mainth1id1} follows from \eqref{guokey}.
\medskip

With the help of \eqref{th1lem1id2}, we have
$$
q\sum_{k=0}^{n-1}\f{(q^{-1};q^2)_k}{(q;q)_k}q^k-\sum_{k=0}^{n-1}\f{(q^{-1};q^2)_k}{(q;q)_k}q^{2k}=(q-1)\f{(q;q^2)_{n-1}}{(q;q)_{n-1}}.
$$
Then we obtain \eqref{mainth1id2} by noting \eqref{mainth1id1} and Lemma \ref{th1lem2}.

The proof of Theorem \ref{maintheorem1} is now complete.\qed

\section{Proof of Theorem \ref{maintheorem2}}
\setcounter{lemma}{0}
\setcounter{theorem}{0}
\setcounter{equation}{0}
\setcounter{conjecture}{0}
\setcounter{remark}{0}

\begin{lemma}\label{th2lem1}
For any positive integer $n$ we have the following identities.
\begin{equation}\label{th2lem1id1}
(1-q)\sum_{k=0}^{n-1}\f{(q^3;q^2)_k}{(q;q)_k}q^k-\f{1}{q}\sum_{k=0}^{n-1}\f{(q;q^2)_k}{(q;q)_k}q^k(1-q^k)=\f{(q^3;q^2)_{n-1}}{(q;q)_{n-1}}(q^{n-1}-q^n)
\end{equation}
and
\begin{equation}\label{th2lem1id2}
\sum_{k=0}^{n-1}\f{(q;q^2)_k}{(q;q)_k}q^k(1-q^{k+1})=(1-q)\f{(q^3;q^2)_{n-1}}{(q;q)_{n-1}}.
\end{equation}
\end{lemma}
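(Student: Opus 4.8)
The plan is to use the telescoping device described in the Introduction, exactly as in the proof of Lemma \ref{th1lem1}. The key point is that both identities should be organized around the single function
$$G(k,q):=\f{(q;q^2)_k}{(q;q)_k}q^k,$$
together with the elementary relation $(1-q^{2k+1})(q;q^2)_k=(q;q^2)_{k+1}=(1-q)(q^3;q^2)_k$, equivalently $(q^3;q^2)_{k-1}=(q;q^2)_k/(1-q)$, which lets one pass between the two $q$-Pochhammer symbols $(q;q^2)_k$ and $(q^3;q^2)_k$ occurring on the two sides of \eqref{th2lem1id1} and \eqref{th2lem1id2}.

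For \eqref{th2lem1id1}, I would first record the recurrence $(1-q^k)G(k,q)=q(1-q^{2k-1})G(k-1,q)$, a one-line verification from the definition of $G$, whose left-hand side vanishes at $k=0$. Summing it over $k=1,\ldots,n-1$, shifting the summation index on the right-hand side, and dividing by $q$ gives
$$\sum_{k=0}^{n-1}(1-q^{2k+1})G(k,q)-\f1q\sum_{k=0}^{n-1}(1-q^k)G(k,q)=(1-q^{2n-1})G(n-1,q).$$
Then I would replace $(1-q^{2k+1})G(k,q)$ by $(1-q)\f{(q^3;q^2)_k}{(q;q)_k}q^k$ in the first sum and $(1-q^{2n-1})G(n-1,q)$ by $\f{(1-q)(q^3;q^2)_{n-1}}{(q;q)_{n-1}}q^{n-1}$ on the right. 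Since $(1-q^k)G(k,q)=\f{(q;q^2)_k}{(q;q)_k}q^k(1-q^k)$ is exactly the $k$-th summand of the second sum in \eqref{th2lem1id1}, the displayed identity becomes precisely \eqref{th2lem1id1}, with right-hand side $\f{(q^3;q^2)_{n-1}}{(q;q)_{n-1}}(q^{n-1}-q^n)$.

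For \eqref{th2lem1id2}, I would instead exhibit $\f{(q^3;q^2)_k}{(q;q)_k}$ as an antidifference. A short computation gives, for $k\gs1$,
$$\f{(q^3;q^2)_k}{(q;q)_k}-\f{(q^3;q^2)_{k-1}}{(q;q)_{k-1}}=\f{(q^3;q^2)_{k-1}}{(q;q)_k}\bigl((1-q^{2k+1})-(1-q^k)\bigr)=\f{(q^3;q^2)_{k-1}}{(q;q)_k}q^k(1-q^{k+1}),$$
and by $(q^3;q^2)_{k-1}=(q;q^2)_k/(1-q)$ this equals $\f1{1-q}\cdot\f{(q;q^2)_k}{(q;q)_k}q^k(1-q^{k+1})$, that is, $\f1{1-q}$ times the $k$-th summand of \eqref{th2lem1id2}. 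Summing this telescoping identity over $k=1,\ldots,n-1$, then adding the $k=0$ term of \eqref{th2lem1id2} (which is simply $1-q$) and multiplying through by $1-q$, produces the closed form $(1-q)\f{(q^3;q^2)_{n-1}}{(q;q)_{n-1}}$.

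I do not anticipate a genuine obstacle: once the reformulations above are in place, both parts reduce to routine manipulation of telescoping sums. The only delicate points are the index shift in the first sum of \eqref{th2lem1id1} and the fact that $\f{(q^3;q^2)_{k-1}}{(q;q)_{k-1}}$ cannot be extended to $k=0$, so the $k=0$ term of \eqref{th2lem1id2} must be handled separately. Spotting the conversions between $(q^3;q^2)_k$ and $(q;q^2)_k$ that make these sums collapse is really the only non-mechanical ingredient.
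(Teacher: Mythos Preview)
Your proposal is correct and follows essentially the same telescoping approach as the paper: for \eqref{th2lem1id1} you use the very same recurrence $(1-q^k)G(k,q)=q(1-q^{2k-1})G(k-1,q)$ (the paper writes the right side as $(q-q^{2k})G(k-1,q)$) together with the conversion $(1-q^{2k+1})(q;q^2)_k=(1-q)(q^3;q^2)_k$. For \eqref{th2lem1id2} the paper applies the Introduction's template to $G_2(k,q)=\f{(q;q^2)_k}{(q;q)_k}$ via $(1-q^k)G_2(k,q)=(1-q^{2k-1})G_2(k-1,q)$, while you instead exhibit $\f{(q^3;q^2)_k}{(q;q)_k}$ directly as an antidifference; unwinding either computation yields the same telescoping sum, so this is a cosmetic repackaging rather than a different argument.
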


\begin{proof}
Set
$$
G_1(k,q)=\f{(q;q^2)_k}{(q;q)_k}q^k.
$$
Then we may easily check that
$$
(1-q^k)G_1(k,q)=(q-q^{2k})G_1(k-1,q).
$$
Summing both sides of the above identity from $k=1$ to $n-1$ we have
$$
q\sum_{k=0}^{n-1}(1-q^{2k+1})G_1(k,q)-\sum_{k=0}^{n-1}(1-q^k)G_1(k,q)=(q-q^{2n})G_1(n-1,q).
$$
Thus we obtain \eqref{th2lem1id1} by noting
$$
(1-q)(q^3;q^2)_k=(1-q^{2k+1})(q;q^2)_k
$$
for all $k$ among $0,1,\dots,n-1$.

Also, \eqref{th2lem1id2} can be deduced by setting
$$
G_2(k,q)=\f{(q;q^2)_k}{(q;q)_k}
$$
and noting that
$$
(1-q^k)G_2(k,q)=(1-q^{2k-1})G_2(k-1,q).
$$
\end{proof}
\medskip

\noindent{\it Proof of Theorem \ref{maintheorem2}}. By \eqref{th2lem1id2} we have
$$
\sum_{k=0}^{n-1}\f{(q;q^2)_k}{(q;q)_k}q^k-q\sum_{k=0}^{n-1}\f{(q;q^2)_k}{(q;q)_k}q^{2k}=(1-q^{2n-1})\f{(q;q^2)_{n-1}}{(q;q)_{n-1}}.
$$
Then \eqref{mainth2id1} follows from \eqref{guokey} and Lemma \ref{th1lem2}. Substituting \eqref{guokey} and \eqref{mainth2id1} we immediately get \eqref{mainth2id2}.

The proof of Theorem \ref{maintheorem2} is now complete.\qed

\begin{Acks}
The first author is supported by the National Natural Science Foundation of China (Grant No. 11971222).
\end{Acks}

\end{document}